\title{\bf Average degree conditions forcing a minor}
\author{Daniel J. Harvey \qquad David R. Wood}
\newcommand{\msn}[1]{MR:\,\href{http://www.ams.org/mathscinet-getitem?mr=MR#1}{#1}}
\newcommand{\doi}[1]{doi:\,\href{http://dx.doi.org/#1}{#1}}
\newtheorem{theorem}{Theorem}
\newtheorem{lemma}[theorem]{Lemma}
\newtheorem{proposition}[theorem]{Proposition}
\newcommand{\ceil}[1]{\lceil{#1}\rceil}
\newcommand{\floor}[1]{\lfloor{#1}\rfloor}
\begin{document}
\maketitle

\begin{abstract}
\citeauthor{fast1} first proved that high average degree forces a given graph as a minor. Often motivated by Hadwiger's Conjecture, much research has focused on the average degree required to force a complete graph as a minor. Subsequently, various authors have consider the average degree required to force an arbitrary graph $H$ as a minor. Here, we strengthen (under certain conditions) a recent result by \citeauthor{superfast5}, giving better bounds on the average degree required to force an $H$-minor when $H$ is a sparse graph with many high degree vertices. This solves an open problem of \citeauthor{superfast5}, and also generalises (to within a constant factor) known results when $H$ is an unbalanced complete bipartite graph.
\end{abstract}

\section{Introduction}

\citet{fast1,fast2} first proved that high average degree forces a given graph as a minor\footnote{A graph $H$ is a \emph{minor} of a graph $G$ if a graph isomorphic to $H$ can be constructed from $G$ by vertex deletion, edge deletion and edge contraction.}. In particular, \citet{fast1,fast2} proved that the following function is well-defined, where $d(G)$ denotes the average degree of a graph $G$:  $$f(H) := \inf\{D \in \mathbb{R} : \text{ every graph }G\text{ with }d(G)\geq D\text{ contains an $H$-minor}\}.$$

Often motivated by Hadwiger's Conjecture (see \citep{Seymour-HC}), much research has focused on $f(K_t)$ where $K_t$ is the complete graph on $t$ vertices. For $3\leq t\leq 9$, exact bounds on the number of edges due to \citet{fast2}, \citet{jorg}, and \citet{SongT} imply that $f(K_t)=2t-4$. But this result does not hold for large $t$. In particular, 
$f(K_t) \in \Theta(t\sqrt{\ln t})$, where the lower bound is independently by \citet{fast4,fast5} and \citet{delaVega} (based on the work of \citet{BCE80}), and the upper bound is independently by \citet{fast4,fast5} and \citet{fast3}. Later, \citet{fast6} determined the exact asymptotic constant.

Subsequently, other authors have considered $f(H)$ for arbitrary graphs $H$. \citet{at-extreme} surveys some of these results. \citet{myersthom} determined an upper bound on $f(H)$ for all $H$, which is tight up to lower order terms when $H$ is dense. 
Hence much recent work has focused on $f(H)$ for sparse graphs $H$. There have been two major approaches in this case.

\subsection*{Specific Sparse Graphs} 

The first approach is to consider specific sparse graphs $H$. For example, \citet{k2t-myers} considered unbalanced complete bipartite graphs $K_{s,t}$ where $s \ll t$. It is easily seen that $f(K_{1,t})=t-1$. \citeauthor{k2t-myers} proved that $f(K_{2,t})=t+1$ for large $t$, and \citet{edk2t} proved the same result for all $t$. \citet{edk3t} proved that $f(K_{3,t})=t+3$ for large $t$. In fact, for all these results, the authors determined the exact maximum number of edges in a $K_{s,t}$-minor-free graph (for $s\leq 3$). 

\citeauthor{k2t-myers}  conjectured that $f(K_{s,t}) \leq c_st$ for some constant $c_s$ depending only on $s$. 
Strengthenings of this conjecture were independently proved by 
\citet{edkst-ko} and \citet{edkst}. In particular, \citet{edkst-ko} proved that for every $\epsilon \in (0,10^{-16})$, if $t$ is sufficiently large (with respect to $\epsilon$) and $s \leq \epsilon^{6}\tfrac{t}{\ln t}$, then
\begin{equation}
\label{KO}
f(K_{s,t}) \leq (1+\epsilon)t.
\end{equation}
\citet{edkst} proved a sharper bound on $f(K_{s,t})$ under a stronger assumption: if $t > (180s \log_2 s)^{1 + 6s \log_2 s}$ then 
\begin{equation}
\label{KP1}
t+3s - 5\sqrt{s} \leq f(K_{s,t}) \leq t + 3s.
\end{equation}
Later, \citet{kp-ii} proved an upper bound between  \eqref{KO} and \eqref{KP1} under a similar assumption to \eqref{KO}: if $s \leq \tfrac{t}{1000\log_2 t}$ then 
\begin{equation}
\label{KP2}
f(K_{s,t}) \leq t + 8s\log_2 s.
\end{equation}

\subsection*{General Sparse Graphs} 

A second approach for sparse graphs is to determine upper bounds on $f(H)$ in terms of invariants of $H$. This is the approach of a recent paper by \citet{superfast5}. Their main result is as follows: for every $t$-vertex graph $H$ with average degree $d(H)$ at least some constant $d_0$, then 
\begin{equation}
\label{RW1}
f(H) \leq 3.895\,t\,\sqrt{\ln d(H)},
\end{equation} 
If $d(H)$ is very small, then this result is not applicable. For all $H$, \citet{superfast5} proved that 
\begin{equation}
\label{RW2}
f(H) \leq (1+3.146\,d(H))\,t.
\end{equation} 
Inequalities \eqref{RW1} and \eqref{RW2} imply that for some constant $c$,  for every $t$-vertex graph $H$, 
\begin{equation}
\label{RW3}
f(H) \leq ct\sqrt{\ln(d(H)+2)}.
\end{equation} 
A lower bound of \citet{myersthom} shows that \eqref{RW3}  is tight for random or close-to-regular graphs $H$. But for some graphs  it is not tight. For example, for $K_{s,t}$ with $s\ll t$, inequality \eqref{RW3} says that $f(K_{s,t})\leq ct\sqrt{\ln s}$ since $d(K_{s,t}) \approx 2s$, whereas $f(K_{s,t})= \Theta(t)$ as discussed above. 

\subsection*{Our Results} 

In this paper, we make a first attempt at unifying these two approaches, both improving the bounds of \citet{superfast5} in certain cases, and generalising the above mentioned results on unbalanced complete bipartite graphs up to a constant factor.

\begin{theorem}
\label{larged}
There is a constant $d_0$ such that for  integers $s\geq 0$ and $t\geq 3$ with $s \leq 10^{-5} \tfrac{t}{\ln t}$, for every $(s+t)$-vertex graph $H$ and set $S$ of $s$ vertices in $H$ such that $d(H-S)\geq d_0$, 
$$f(H) \leq 3.895\,t\sqrt{\ln d(H-S)}.$$
\end{theorem}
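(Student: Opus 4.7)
The plan is to reduce the problem to inequality \eqref{RW1} applied to the sparse graph $H-S$: combining an $(H-S)$-minor produced by \eqref{RW1} with $s$ small, ``apex-like'' branch sets corresponding to the vertices of $S$ will yield an $H$-minor.

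Set $D := 3.895\,t\sqrt{\ln d(H-S)}$ and let $G$ be a graph with $d(G) \ge D$. After passing to a subgraph $G_1 \subseteq G$ with $\delta(G_1) \ge D/2$ and, by standard reductions, bounding $|V(G_1)|$ by a function of $D$, the central step is to find $s$ vertices $v_1,\dots,v_s \in V(G_1)$ and an induced subgraph $G_2 \subseteq G_1 - \{v_1,\dots,v_s\}$ such that every vertex of $G_2$ is adjacent in $G_1$ to each of $v_1,\dots,v_s$, and such that $d(G_2) \ge D$. Applying \eqref{RW1} to $G_2$ then yields an $(H-S)$-minor with branch sets $X_1,\dots,X_t \subseteq V(G_2)$. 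Using $\{v_i\}$ as the branch set for the $i$-th vertex of $S$ realises every $H$-edge between $S$ and $V(H)\setminus S$; any edges inside $S$ are realised by enlarging the $\{v_i\}$-branch sets through disjoint short paths in $V(G_1) \setminus (V(G_2) \cup \{v_1,\dots,v_s\})$, which is possible because $G_1$ has high minimum degree and $s$ is small.

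The main obstacle is the construction of the configuration $(v_1,\dots,v_s;G_2)$ without eroding the average degree below $D$: naively, the common neighborhood of $s$ chosen vertices can induce a subgraph whose average degree degrades by a factor approaching $(\delta(G_1)/|V(G_1)|)^s$. The hypothesis $s \le 10^{-5}\,t/\ln t$ is calibrated precisely so that, via a dependent-random-choice argument in the spirit of \citet{edkst-ko}'s proof of \eqref{KO}, the expected size and expected edge-count in the induced common neighborhood of $s$ random vertices concentrate sufficiently for a configuration with $d(G_2) \ge D$ to exist with positive probability. Carrying this through while preserving the Reed--Wood constant $3.895$ in the final bound is the bulk of the technical work.
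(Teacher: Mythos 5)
There is a genuine gap: the entire weight of your argument rests on producing $s$ vertices $v_1,\dots,v_s$ together with a subgraph $G_2$ of their \emph{common} neighbourhood satisfying $d(G_2)\ge D$, and this step is neither carried out nor plausibly achievable as described. Even after passing (via \cref{rw1} --- this is not a ``standard reduction'' but a substantive lemma) to a minor $G'$ on $n\le 4k$ vertices with $\delta(G')\ge 0.6518n$, the common neighbourhood of $s$ arbitrary vertices is only guaranteed to have size $n(1-0.3482s)$, which is empty already for $s=3$; and the expected-value calculation you invoke degrades like $(0.65)^s$, which is super-polynomially small for $s$ as large as $10^{-5}t/\ln t$. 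Dependent random choice gives a large set in which every small subset has many common neighbours --- the reverse of what you need --- and would not recover a common neighbourhood with average degree $\ge D$. Indeed your target $d(G_2)\ge D$ is self-defeating: you would be asking the dense substructure to have the \emph{same} average degree you started with, after deleting most of the graph.

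The paper's proof avoids this entirely with a different idea. Rather than seeking single vertices with a dense common neighbourhood, it extracts $s$ pairwise disjoint \emph{connected dominating sets} $A_1,\dots,A_s$, each of size less than $2\log_2 n$ (\cref{cdset}, iterated in \cref{sblocks}); contracting each $A_i$ to a point manufactures $s$ universal vertices at a total cost of only $2s\log_2 n\le 10^{-4}n$ deleted vertices, so the remainder $G''$ still has minimum degree $\ge 0.6517|V(G'')|$. Crucially, the $(H-S)$-minor in $G''$ is then obtained not from \eqref{RW1} (whose average-degree hypothesis $G''$ no longer satisfies --- $d(G'')$ is roughly $0.65\cdot\tfrac14 D$) but from the minimum-degree-based \cref{rw2}, which only requires $\delta(G'')\ge\lambda|V(G'')|$ and $|V(G'')|\ge(1+\epsilon)\ceil{\sqrt{\log_b d}}\,t$. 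Your high-level plan (apex branch sets for $S$ plus a sparse minor for $H-S$) matches the paper's, but the two mechanisms that make it work --- contracted dominating sets in place of single common neighbours, and a minimum-degree embedding lemma in place of re-applying \eqref{RW1} --- are missing, and without them the argument does not go through.
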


\cref{larged} solves the second open problem of \citet{superfast5}, and is an improvement over \eqref{RW1} when $H$ contains a set $S$ of vertices with high degree (compared to the average degree). Then \cref{larged} forces $H$ as a minor in a graph $G$ as long as $d(G) > 3.895\,t\sqrt{\ln d}$, where $d$ is the average degree of $H-S$, instead of $H$ itself. Since vertices in $S$ have high degree, one expects that $d(H-S)$ is significantly less than $d(H)$. 

Our second contribution relates $f(H)$ directly to $f(H-S)$.

\begin{theorem}
\label{genfh}
For $\epsilon \in (0,1]$ and integers $s\geq 0$ and $t\geq 3$ with $s \leq \frac{\epsilon}{100} \tfrac{t}{\ln t}$, for every graph $H$ and set $S$ of $s$ vertices in $H$, 
$$f(H) \leq 4\ceil{(1+\epsilon)f(H-S)}.$$ 
%
%
\end{theorem}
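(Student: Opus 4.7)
Set $D := \lceil(1+\epsilon)f(H-S)\rceil$ and take any graph $G$ with $d(G)\geq 4D$; the goal is to exhibit $H$ as a minor of $G$.

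First, pass to a subgraph $G_1\subseteq G$ with $\delta(G_1)\geq 2D$, using the standard fact that average degree $\geq 2k$ forces a subgraph of minimum degree $\geq k$. Reserve $s$ ``hub'' vertices $v_1,\dots,v_s\in V(G_1)$ to eventually play the roles of $S=\{s_1,\dots,s_s\}$, and set $G_2 := G_1-\{v_1,\dots,v_s\}$. Each surviving vertex has lost at most $s$ neighbors, so $\delta(G_2)\geq 2D-s$. Since $H-S$ has $t$ vertices, the trivial bound $f(H-S)\geq t-1$ holds, while $s\leq \frac{\epsilon}{100}\frac{t}{\ln t}$ is far below $(1+\epsilon)f(H-S)$; hence $\delta(G_2)\geq (1+\epsilon)f(H-S)> f(H-S)$, forcing an $(H-S)$-minor $M$ in $G_2$ with branch sets $X_1,\dots,X_t$.

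Next, I would upgrade $M$ to an $H$-minor by building pairwise-disjoint connected branch sets $Y_1,\dots,Y_s\subseteq V(G_1)\setminus\bigcup_j X_j$ with $v_i\in Y_i$, so that $Y_i$ has an edge of $G_1$ to each $X_j$ with $s_iu_j\in E(H)$ and to each $Y_k$ with $s_is_k\in E(H)$. The idea is to grow each $Y_i$ as a small BFS-style tree rooted at $v_i$ through the ``leftover'' vertices $V(G_1)\setminus\bigcup_j X_j$, exploiting the high minimum degree of $G_1$ to guarantee reach.

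The hard part is this simultaneous extension: a single $Y_i$ may have to reach up to $t$ distinct targets $X_j$, and the growths for different $i$ must stay pairwise disjoint and disjoint from $M$. A promising route is to defer the choice of hubs, fixing $M$ first and then selecting $v_1,\dots,v_s$ from $V(G_1)\setminus\bigcup_j X_j$ by an averaging/pigeonhole argument so that each $v_i$ already has neighbors in many $X_j$'s; the few remaining targets are then hit by short paths of length $O(\log t)$ appended to $Y_i$ within the leftover vertices. The $\ln t$ factor in the hypothesis on $s$ is precisely what allows the resulting union-bound/BFS-depth bookkeeping to close, while the $(1+\epsilon)$ slack in $D$ absorbs the degree cost paid by the extra vertices used in the extension.
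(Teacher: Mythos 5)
Your reduction to ``find an $(H-S)$-minor, then attach $s$ hubs'' is the right way to think about the theorem, but the attachment plan has a genuine gap, and it traces back to your very first step. Passing to a subgraph $G_1$ with $\delta(G_1)\geq 2D$ gives a graph whose minimum degree is large in absolute terms but possibly negligible compared with $|V(G_1)|$: think of $G_1$ as a $4D$-regular graph of large girth on $n\gg D$ vertices. There the branch sets $X_1,\dots,X_t$ of the $(H-S)$-minor are necessarily spread out; the average, over leftover vertices, of the number of branch sets they are adjacent to can be made arbitrarily small as $n\to\infty$, so the pigeonhole step finds nothing, and no vertex need be adjacent to more than a bounded number of the $X_j$. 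A connected set $Y_i$ that must be adjacent to up to $t$ pairwise disjoint targets in a locally tree-like graph is forced to be a large Steiner tree, not a BFS ball of depth $O(\log t)$, and building $s$ such trees pairwise disjointly and disjointly from $M$ is a linkage problem that minimum degree $2D$ alone does not solve. So the step you yourself flag as ``the hard part'' really is unresolved. (A minor side issue: the trivial lower bound is $f(H-S)\geq t-2$, witnessed by $K_{t-1}$, not $t-1$; this does not affect anything.)

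The paper avoids the routing problem entirely by densifying \emph{relative to the order} before doing anything else. \cref{rw1} (Reed and Wood) yields either a $K_k$-minor, which already contains $H$ since $k> t+s$, or a minor $G'$ on $n\leq 4k$ vertices with $\delta(G')\geq 0.6518\,n$. In a graph whose minimum degree exceeds half its order one can greedily peel off $s$ pairwise disjoint \emph{connected dominating sets} $A_1,\dots,A_s$, each of size less than $2\log_2 n$ (\cref{cdset,sblocks}); contracting $A_i$ produces a vertex adjacent to everything that remains, so every adjacency required of a vertex of $S$ comes for free, with no path-finding at all. Removing the $A_i$ costs only $2s\log_2 n\leq 0.1518\,n$ in minimum degree, which is exactly what the hypothesis $s\leq\frac{\epsilon}{100}\frac{t}{\ln t}$ and the $(1+\epsilon)$ slack are spent on, so the remaining graph still has minimum degree exceeding $f(H-S)$ and the $(H-S)$-minor is found there. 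If you want to repair your argument you need some version of this densification step (or another mechanism producing globally dominating hubs); the attachment problem is not solvable at the level of generality of an arbitrary minimum-degree-$2D$ subgraph.
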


\cref{genfh} may allow a better result that \cref{larged} if we happen to know a good upper bound on $f(H-S)$. It also makes no explicit assumption on $d(H-S)$. Theorems~\ref{larged} and \ref{genfh} together imply the following general result.

\begin{theorem}
\label{total}
There is a constant $c$ such that for  integers $s\geq 0$ and $t\geq 3$ with $s \leq 10^{-5} \tfrac{t}{\ln t}$, 
for every $(s+t)$-vertex graph $H$ and set $S$ of vertices in $H$ of size $s$,
$$f(H) \leq ct\sqrt{\ln(d(H-S)+2)}.$$
\end{theorem}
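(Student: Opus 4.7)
The plan is to derive Theorem~\ref{total} as a corollary of Theorems~\ref{larged} and \ref{genfh} by case-splitting on the size of $d(H-S)$ with threshold $d_0$, the constant from Theorem~\ref{larged}. In the ``easy'' regime, Theorem~\ref{larged} already gives essentially the desired bound; in the ``small average degree'' regime, I pay a constant factor to use the linear bound~\eqref{RW2} on $f(H-S)$ and lift it to $f(H)$ via Theorem~\ref{genfh}.

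Concretely, suppose first that $d(H-S) \geq d_0$. The hypothesis $s \leq 10^{-5} t/\ln t$ and $t \geq 3$ is exactly what Theorem~\ref{larged} requires, so it yields $f(H) \leq 3.895\,t\sqrt{\ln d(H-S)} \leq 3.895\,t\sqrt{\ln(d(H-S)+2)}$. Thus $c = 3.895$ already suffices in this regime.

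In the complementary regime $d(H-S) < d_0$, I would first apply~\eqref{RW2} to the $t$-vertex graph $H-S$ to get $f(H-S) \leq (1 + 3.146\,d(H-S))\,t < (1 + 3.146\,d_0)\,t$, a bound linear in $t$ with an absolute constant. Since $10^{-5} \leq 1/100$, the hypothesis of Theorem~\ref{genfh} holds with $\epsilon = 1$, so
$$f(H) \leq 4\ceil{2 f(H-S)} \leq 8(1 + 3.146\,d_0)\,t + 4.$$
Because $d(H-S) \geq 0$ gives $\sqrt{\ln(d(H-S)+2)} \geq \sqrt{\ln 2}$, and because $t \geq 3$ lets me absorb the additive $+4$ into the linear term in $t$, this upper bound is at most $c''\,t\sqrt{\ln(d(H-S)+2)}$ for some absolute constant $c''$ depending only on $d_0$.

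Taking $c$ to be the maximum of $3.895$ and this $c''$ finishes the proof. There is no real obstacle here: all the structural content lives in Theorems~\ref{larged} and \ref{genfh}, and Theorem~\ref{total} is essentially a bookkeeping corollary that interpolates between the large-$d(H-S)$ regime (where Theorem~\ref{larged} is nearly sharp) and the bounded-$d(H-S)$ regime (where \eqref{RW2} plus Theorem~\ref{genfh} gives a bound linear in $t$, which is trivially swallowed by $t\sqrt{\ln(d(H-S)+2)} \geq t\sqrt{\ln 2}$).
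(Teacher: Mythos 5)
Your proposal is correct and follows essentially the same route as the paper: split on whether $d(H-S)\geq d_0$, apply \cref{larged} in the first case, and in the second case combine \eqref{RW2} with \cref{genfh} (the paper uses $\epsilon=10^{-3}$ where you use $\epsilon=1$, but either choice satisfies the hypothesis since $10^{-5}\leq\epsilon/100$). The bookkeeping with the ceiling and the lower bound $\sqrt{\ln(d(H-S)+2)}\geq\sqrt{\ln 2}$ is handled correctly.
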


Consider \cref{total} with $H=K_{s,t}$, where $S$ is the smaller colour class. Thus $d(H-S)=0$ and  \cref{total} says that $f(H)\leq ct$, for some constant $c$ independent of $s$, which is within a constant factor of the bounds in  \eqref{KO}, \eqref{KP1} and \eqref{KP2}. Note however that these older results are still stronger, since \cref{total} has a large multiplicative constant. On the other hand, \cref{total} applies more generally when $H-S$ is not an independent set.

The following section contains a few preliminary lemmas. Section~\ref{sec:thm1} contains proofs of Theorems~\ref{larged}, \ref{genfh} and \ref{total}. Section~\ref{sec:thm2} contains an observation about $f(H)$ when $H$ is series-parallel. Section~\ref{sec:future} considers some future directions.

\section{Preliminaries}

We first prove a few useful preliminaries before proving our theorems.

A \emph{connected dominating set} $A$ of a graph $G$ is a set of vertices such that the induced subgraph $G[A]$ is connected, and each vertex of $V(G)$ is either in $A$ or adjacent to a vertex in $A$. \cref{cdset} is well known, and weaker than other previous results such as that by \citet{west_dom}. We present it here for completeness and because the upper bound on the order of the connected dominating set has no lower order terms, which simplifies the calculations in Section~\ref{sec:thm1}. Similarly, \cref{sblocks} resembles a previous result of \citet{edkst}.

\begin{lemma}
\label{cdset}
Every graph $G$ with $n$ vertices and minimum degree at least $\tfrac{1}{2}n$ has a connected dominating set of order less than $2\log_{2}n$.
\end{lemma}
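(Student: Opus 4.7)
My plan is a greedy construction: starting from a single seed vertex, I add at most two vertices per stage while ensuring that the set of undominated vertices strictly more than halves each stage. Set $A_0 := \{v_0\}$ for an arbitrary vertex $v_0$; since every vertex has at least $\tfrac{1}{2}n$ neighbours, $|N[v_0]| \geq \tfrac{1}{2}n + 1$, so the undominated set $U_0 := V(G) \setminus N[A_0]$ satisfies $|U_0| < \tfrac{1}{2}n$. I will maintain the invariants that $G[A_i]$ is connected and $|U_{i+1}| < |U_i|/2$.

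The crux is a double-counting averaging argument: for any $A_i$ with nonempty $U_i$,
\begin{equation*}
\sum_{v \in V(G)} |N[v] \cap U_i| \;=\; \sum_{u \in U_i} |N[u]| \;\geq\; |U_i|\bigl(\tfrac{1}{2}n + 1\bigr),
\end{equation*}
so some $v^* \in V(G) \setminus A_i$ satisfies $|N[v^*] \cap U_i| > \tfrac{1}{2}|U_i|$. To attach $v^*$ to $A_i$ while preserving connectivity, I use that the minimum-degree hypothesis forces $G$ to have diameter at most $2$ (for any $x, y$, $|N[x]| + |N[y]| > n$ gives $N[x] \cap N[y] \neq \emptyset$), so $v^*$ lies within distance $2$ of $A_i$ and can be adjoined together with at most one intermediate ``bridge'' vertex from $N(v^*) \cap N(A_i)$. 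This gives $|A_{i+1}| \leq |A_i| + 2$ and $|U_{i+1}| < \tfrac{1}{2}|U_i|$ as required.

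Iterating yields $|U_k| < |U_0|/2^k < n/2^{k+1}$, so $U_k = \emptyset$ after at most about $\log_2 n - 1$ stages, producing a connected dominating set of size roughly $1 + 2(\log_2 n - 1) = 2 \log_2 n - 1 < 2 \log_2 n$. The main obstacle I anticipate is squeezing out the exact constant: to rule out a spurious $+O(1)$ slack one must use the integrality of $|U_i|$, the strict initial bound $|U_0| \leq \lfloor \tfrac{1}{2}n \rfloor - 1$, and the strict halving $|U_{i+1}| < |U_i|/2$ in tandem; a handful of small-$n$ boundary cases can be dispatched directly, since graphs with minimum degree at least $\tfrac{1}{2}n$ and small $n$ trivially admit tiny connected dominating sets.
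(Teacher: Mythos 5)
Your proof is correct but takes a genuinely different route from the paper. The paper's argument is probabilistic: it picks a uniformly random set of $\floor{\log_2 n}$ vertices, notes that each vertex fails to be dominated with probability less than $(\tfrac12)^{\log_2 n}=\tfrac1n$, concludes by a union bound that some such set dominates, and then spends at most $\floor{\log_2 n}-1$ further "bridge" vertices (common neighbours of consecutive pairs, which exist by the degree condition) to make it connected. Your greedy scheme instead interleaves domination and connection, paying at most two vertices per stage while the undominated set more than halves; the two ingredients you use --- each closed neighbourhood covers more than half of any undominated set on average, and diameter at most $2$ --- are exactly the two facts the paper uses, just deployed deterministically rather than via a random choice. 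What your version buys is an explicit polynomial-time construction with no derandomization step; what the paper's buys is brevity and a cleaner final count. The one delicate point is the constant, which you correctly flag: the naive count $1+2(\ceil{\log_2 n}-1)=2\ceil{\log_2 n}-1$ can exceed $2\log_2 n$ (e.g.\ $n=9$). Your proposed fix does work: with $m_0:=|U_0|\leq\floor{n/2}-1$ and the integer recursion $m_{i+1}\leq\ceil{m_i/2}-1$, one gets $m_k+1\leq(m_0+1)/2^k$, so $k^*=\floor{\log_2\floor{n/2}}\leq\log_2 n-1$ stages suffice and the final set has at most $1+2k^*\leq 2\log_2 n-1<2\log_2 n$ vertices. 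You should write out this computation rather than leave it as an anticipated obstacle, but the argument is sound.
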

\begin{proof}
Let $A$ be a set of $\floor{\log_{2}n}$ vertices in $G$ chosen uniformly at random. For each vertex $x$ of $G-A$, since $\deg_G(x)\geq\frac{n}{2}$, the probability that $x$ has no neighbour in $A$ is less than $(\tfrac{1}{2})^{\log_2 n} = \tfrac{1}{n}$, and so the probability that $A$ does not dominate $G$ is less than $1$. 
Hence there is some choice of $A$ that does dominate $G$. Label the vertices of this $A$ by $x_1, \dots, x_{\floor{\log_2n}}$. For $i=1,\dots,\floor{\log_2n}-1$, let $v_i = x_{i+1}$ if $x_i$ and $x_{i+1}$ are adjacent, otherwise let $v_i$ be a common neighbour of $x_i$ and $x_{i+1}$, which exists since $x_i$ and $x_{i+1}$ each have at least $\tfrac{n}{2}$ neighbours in a set of $n-2$ vertices. Then $A \cup \{v_1, \dots, v_{\floor{\log_2n}-1}\}$ is a connected dominating set of $G$ with less than $2\log_2n$ vertices. 
\end{proof}

\begin{lemma}
\label{sblocks}
For every integer $s \geq 1$ and $\rho \geq \tfrac{1}{2}$, every graph $G$ with $n$ vertices and minimum degree at least $\rho n + 2s\log_2n$ contains vertex sets $A_1,\dots,A_s$ and subgraphs $G_0,G_1,\dots,G_s$ such that $G_0 = G$, $G_i = G-(A_1 \cup \dots \cup A_{i})$ for $i \in \{1,\dots,s\}$, and
\begin{enumerate}
\item[(a)] $A_i$ is a connected dominating set in $G_{i-1}$,
\item[(b)] $|A_i| < 2\log_2n$,
\item[(c)] $G_i$ has minimum degree at least $\rho n + 2(s-i)\log_2n$.
\item[(d)] $G_i$ has at least $n-2i\log_2n$ vertices.
\end{enumerate}
\end{lemma}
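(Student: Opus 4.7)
The plan is to construct the sets $A_1, \ldots, A_s$ and subgraphs $G_0, G_1, \ldots, G_s$ iteratively, building $A_i$ by applying \cref{cdset} to $G_{i-1}$, and verifying the four conditions by a straightforward induction on $i$.

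Set $G_0 := G$, and suppose inductively that $G_0, \ldots, G_{i-1}$ and $A_1, \ldots, A_{i-1}$ have been constructed so that (a)--(d) hold up through index $i-1$. In particular, $G_{i-1}$ has at least $n - 2(i-1)\log_2 n$ vertices and minimum degree at least $\rho n + 2(s-i+1)\log_2 n$. Writing $n' := |V(G_{i-1})| \leq n$, the hypothesis $\rho \geq \tfrac{1}{2}$ gives that $G_{i-1}$ has minimum degree at least $\rho n \geq \tfrac{n}{2} \geq \tfrac{n'}{2}$, so \cref{cdset} applies to $G_{i-1}$ and yields a connected dominating set $A_i$ of $G_{i-1}$ with $|A_i| < 2\log_2 n' \leq 2\log_2 n$. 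This gives (a) and (b) for index $i$.

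Define $G_i := G_{i-1} - A_i = G - (A_1 \cup \cdots \cup A_i)$. Condition (d) at index $i$ follows immediately: $|V(G_i)| \geq |V(G_{i-1})| - |A_i| > n - 2(i-1)\log_2 n - 2\log_2 n = n - 2i\log_2 n$. For (c), each vertex of $G_i$ loses at most $|A_i| < 2\log_2 n$ neighbours, so its degree in $G_i$ is at least $\bigl(\rho n + 2(s-i+1)\log_2 n\bigr) - 2\log_2 n = \rho n + 2(s-i)\log_2 n$, as required.

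The ``obstacle'' is essentially just bookkeeping: we must check that after each removal the remaining graph still satisfies the hypothesis of \cref{cdset}, i.e.\ minimum degree at least half its current order. The cushion built into the hypothesis -- the $2s\log_2 n$ additive term -- is exactly what covers the $s$ successive reductions, each of size less than $2\log_2 n$, and the assumption $\rho \geq \tfrac{1}{2}$ (together with $n' \leq n$) ensures that throughout the process the minimum-degree-to-order ratio remains $\geq \tfrac{1}{2}$, even though the order shrinks while the required degree threshold is measured against the original $n$. Once these two observations are in place, the induction closes with no further work.
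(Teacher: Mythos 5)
Your proposal is correct and follows essentially the same route as the paper: induct on $i$, apply \cref{cdset} to $G_{i-1}$ at each step (using $\rho \geq \tfrac12$ and $|V(G_{i-1})| \leq n$ to verify its hypothesis), and track the degree and order losses against the $2s\log_2 n$ cushion. No gaps.
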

\begin{proof}
We use induction on $i$. Say $i=1$. Let $A_1$ be a connected dominating set in $G_0=G$ from \cref{cdset}. This satisfies $(a)$ and $(b)$. The minimum degree of $G_1 = G - A_1$ is at least the minimum degree of $G$ minus $|A_1|$, which proves $(c)$. The number of vertices of $G_1$ is at least $n-|A_1|$, which proves $(d)$.

Assume our lemma holds for $i-1$. Let $A_i$ be a connected dominating set in $G_{i-1}$ from \cref{cdset}; such a set exists since $G_{i-1}$ has minimum degree greater than $\rho n \geq \tfrac{1}{2}n$. This satisfies $(a)$ and $(b)$. Finally, $G_i$ has minimum degree at least $\rho n + 2(s-(i-1))\log_2n - |A_i| > \rho n - 2(s-i)\log_2n$ and $|V(G_i)| \geq n-2(i-1)\log_2n - |A_i| > n -2i\log_2n$, as required.
\end{proof}

Finally, we cite two key results of \citet{superfast5}.

\begin{lemma}[\citet{superfast5}, Lemma 2.5]
\label{rw1}
For every integer $k \geq 1$, every graph with average degree at least $4k$ contains a complete graph $K_k$ as a minor or contains a minor with $n$ vertices and minimum degree $\delta$, where $\delta \geq 0.6518n$, and $k \leq \delta < n \leq 4k$.
\end{lemma}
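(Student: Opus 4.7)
The plan is to work with a carefully chosen extremal minor of $G$. Let $H$ be a minor of $G$ chosen to minimise $n := |V(H)|$ subject to $\delta(H) \geq k$. Such an $H$ exists: Mader's classical theorem (already buried in the $4k$-density hypothesis) guarantees that any graph of average degree at least $4k$ contains a minor of minimum degree at least $k$. With this choice, two of the three inequalities in the conclusion are automatic, namely $k \leq \delta(H)$ and $\delta(H) < n$ (the latter unless $H = K_n$, in which case $n \geq k+1$ already yields $K_k \preceq H$ and we are done).

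Next I would bound $n$ from above by $4k$. If $n > 4k$, then pass from $H$ to a subgraph by repeatedly deleting a vertex of degree less than $2k$ until none remains; the resulting graph $H'$ still has average degree at least $4k$ because the hypothesis $d(G) \geq 4k$ is inherited through contractions and the deletion step never destroys this inequality. Now $H'$ has minimum degree at least $2k \geq k$ but has strictly fewer vertices than $H$, contradicting the minimality of $n$. (The $4k$ versus $2k$ slack is precisely what lets one "downsize" an overlarge minor, and is the reason the lemma assumes $4k$ rather than, say, $k$.)

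It remains to show that either $\delta(H) \geq 0.6518\,n$ or $K_k \preceq H$. Here I would exploit the minimality of $n$ at the level of \emph{edge contractions}: for every edge $uv \in E(H)$, the minor $H/uv$ has $n-1$ vertices and therefore, by minimality, some vertex of $H/uv$ has degree less than $k$. This forces strong co-degree constraints (essentially, many edges $uv$ have small $|N_H(u) \cap N_H(v)|$, or the identified vertex itself has degree exactly $k$ in $H$ with very restricted neighbourhood). Combining these constraints with a counting argument on the number of low-degree vertices, one either locates a large set of pairwise-adjacent vertices whose contraction produces $K_k$, or one shows every vertex has degree at least $\alpha n$ where $\alpha$ solves a quadratic arising from the contraction trade-off.

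The main obstacle is the constant $0.6518$ itself. It is not a natural constant of Mader-style arguments; extracting it requires a delicate joint optimisation between (i) how many edges one loses when contracting a low-codegree edge, (ii) how many low-degree vertices can coexist in a vertex-minimal minor, and (iii) when those low-degree vertices' neighbourhoods can be rearranged into a clique minor of size $k$. Setting up the three-way balance so that the extracted minimum-degree ratio is as large as possible — while keeping $n \leq 4k$ — is the technical heart of the proof, and the reason I would ultimately defer to the careful analysis of \citet{superfast5} rather than re-derive the numerical constant from scratch.
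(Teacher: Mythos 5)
The paper does not actually prove this statement: it is imported verbatim as Lemma~2.5 of \citet{superfast5}, so there is no in-paper proof to compare against, and your proposal must stand on its own. It does not, for two concrete reasons.

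First, the step establishing $n\leq 4k$ rests on the claim that your pruned graph $H'$ ``still has average degree at least $4k$ because the hypothesis $d(G)\geq 4k$ is inherited through contractions.'' Average degree is not inherited by minors: contracting an edge that lies in many triangles destroys many edges (contract $K_5$ down to a single edge and the average degree drops from $4$ to $1$). In particular your extremal object $H$ --- a vertex-minimal minor with $\delta(H)\geq k$ --- has no reason to have average degree anywhere near $4k$; it could perfectly well be exactly $k$-regular, in which case the deletion process never starts and no contradiction with minimality is obtained. The standard repair is to make the extremal choice on the edge-density condition itself: take $J$ a minor of $G$, minimal under taking minors, subject to $e(J)\geq 2k\,|V(J)|$. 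Then minimality (not inheritance) forces $\delta(J)>2k$, since deleting a vertex of degree at most $2k$ preserves the condition, and forces every edge of $J$ to lie in at least $2k$ triangles, since otherwise contracting that edge preserves the condition; this is Lemma~2.1 of \citet{superfast5} and is the engine of their argument.

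Second, the actual content of the lemma --- the constant $0.6518$ --- is entirely deferred. Even granting $\delta\geq 2k$ and $n\leq 4k$ (the latter unproved, as above), one only obtains $\delta\geq n/2$, strictly weaker than $\delta\geq 0.6518\,n$. Your final paragraph correctly identifies the right lever (every contraction must create a vertex of degree below the threshold, which constrains co-degrees), but ``a counting argument'' and ``a quadratic arising from the contraction trade-off'' are placeholders rather than an argument, and you explicitly defer the numerics to \citet{superfast5}. Since both the bound $n\leq 4k$ and the ratio $\delta/n\geq 0.6518$ remain unestablished, the proposal does not prove the statement; in the present paper it is, correctly, a citation rather than a claim requiring proof.
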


\begin{lemma}[\citet{superfast5}, Lemma 5.1]
\label{rw2}
For all $\lambda \in (\tfrac{1}{2},1)$ and $\epsilon \in (0, \lambda)$ there exists $d_0(\lambda,\epsilon)$ such that for every graph $H$ with $t$ vertices and average degree $d \geq d_0$ every graph $G$ with $n \geq (1+\epsilon)\ceil{\sqrt{\log_bd}}\,t$ vertices and minimum degree at least $\lambda n$ contains $H$ as a minor, where $b = (1-\lambda + \epsilon)^{-1}$.
\end{lemma}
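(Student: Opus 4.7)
The plan is to partition $V(G)$ uniformly at random into $t$ groups $V_1,\dots,V_t$ of common size $m:=\lfloor n/t\rfloor \geq (1+\epsilon)\lceil\sqrt{\log_b d}\rceil$, indexed by the vertices of $H$, and use a connected subset of each $V_i$ as the branch set for the $i$-th vertex of $H$. Two things must then be verified: (i) for every edge $uv\in E(H)$ there is at least one $G$-edge between $V_u$ and $V_v$; and (ii) each $V_i$, after small modifications, induces a connected subgraph of $G$.

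For (i), the key computation is to bound the probability that a specific pair $(V_u,V_v)$ contains no $G$-edge. By the minimum-degree hypothesis, the set $B\subseteq V(G)$ of vertices with no neighbour in $V_v$ has expected size at most $n(1-\lambda+o(1))^m$; consequently $\Pr[V_u\subseteq B]\leq (1-\lambda+o(1))^{m^2}$. With $b=(1-\lambda+\epsilon)^{-1}$ and $m\geq(1+\epsilon)\sqrt{\log_b d}$, this is at most $d^{-(1+\epsilon)^2(1-o(1))}$. A union bound over the $\leq \tfrac{1}{2}dt$ edges of $H$ then succeeds provided $d\geq d_0(\lambda,\epsilon)$, with the $(1+\epsilon)$-slack supplying the polynomial gap $d^{2\epsilon+\epsilon^2}$ needed to absorb the $\tfrac{1}{2}dt$ factor and subleading corrections; the Lov\'{a}sz Local Lemma handles residual dependencies in any regime where the plain union bound is too crude.

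For (ii) the hypothesis $\lambda>\tfrac{1}{2}$ is essential: any two vertices of $G$ have at least $(2\lambda-1)n$ common neighbours, so $G$ has diameter $2$. I would reserve a small ``glue'' pool $R\subseteq V(G)$ of size $O(t)$ before partitioning, apply the random partition to $V(G)\setminus R$, and then greedily stitch each $V_i$ together by inserting a common neighbour from $R$ between consecutive vertices of $V_i$ in some ordering. The total glue used is $O(tm)=O(t\sqrt{\log_b d})$, which fits inside the $\epsilon$-slack in the hypothesis on $n$, and since $R$ is disjoint from the random partition the stitching does not affect the edge events of (i).

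The main obstacle is the probability calculation: turning the mean-$\Omega(\log_b d)$ inter-part edge count into the sharp per-edge failure bound $d^{-(1+\epsilon)^2}$ is delicate because the natural random variables are sums of dependent indicators, so one really needs the two-step argument above (bounding $|B|$ first and then $V_u\subseteq B$), or an equivalent second-moment or LLL argument, rather than a crude first-moment bound that would only yield $b^{-m}$. The threshold $\sqrt{\log_b d}$ is exactly tuned to this computation, and matches the Myers--Thomason lower bound when $H=K_t$, so it cannot be substantially improved.
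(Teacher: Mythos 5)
This paper does not actually prove \cref{rw2}; it is quoted verbatim from \citet{superfast5} (their Lemma~5.1), so there is no in-paper proof to compare against. Judged on its own terms, your proposal follows the natural first idea but has gaps exactly where the real difficulty of the lemma lies. First, the per-edge failure bound is not justified and is in fact false for general $G$: from $\mathbb{E}|B|\leq n(1-\lambda+o(1))^m$ you cannot conclude $\Pr[V_u\subseteq B]\leq(1-\lambda+o(1))^{m^2}$, since $\Pr[V_u\subseteq B]$ is governed by $\mathbb{E}\binom{|B|}{m}$ and Jensen runs the wrong way; worse, if $G$ is close to a balanced complete multipartite graph with $\lceil(1-\lambda)^{-1}\rceil$ parts (which has minimum degree about $\lambda n$), then two random disjoint $m$-sets fail to be joined with probability about $(1-\lambda)^{2m-1}$, vastly larger than $(1-\lambda)^{m^2}$. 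Such ``exceptional'' host graphs are precisely the obstruction that makes the Kostochka--Thomason bounds on $f(K_t)$ hard; any proof of \cref{rw2} must detect them and handle them separately (for instance by extracting a large clique minor directly), and your argument does not.

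Second, even granting the bound $d^{-(1+\epsilon)^2}$ per edge, the union bound over the $dt/2$ edges of $H$ gives $\tfrac{t}{2}\,d^{-2\epsilon-\epsilon^2}$: the slack $d^{2\epsilon+\epsilon^2}$ absorbs the factor $d$ but not the factor $t$, and $t$ is unbounded while $d\geq d_0(\lambda,\epsilon)$ is only a constant --- which is exactly the regime in which this paper applies the lemma. The Lov\'asz Local Lemma does not rescue this, because the event for an edge $uv$ depends on the $\deg_H(u)+\deg_H(v)-2$ other edges meeting $\{u,v\}$, and $H$ may have vertices of degree up to $t-1$ with no bound in terms of $d$ (the lemma assumes nothing about $\Delta(H)$). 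Finally, the connectivity step is miscounted: stitching an $m$-set into a connected branch set via intermediate common neighbours costs up to $m-1$ glue vertices per part, hence about $t(m-1)\approx n$ in total rather than $O(t)$; reserving that much glue roughly halves the usable part size and destroys the exponent $m^2$ in the edge computation. Each of these issues requires a genuinely new idea, so the proposal cannot be accepted as a proof of the lemma.
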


\section{Proofs of Theorems}
\label{sec:thm1}

\begin{proof}[Proof of \cref{larged}]
Let $G$ be a graph with $d(G) \geq 3.895\,t\sqrt{\ln d}$ where $d:=d(H-S)\geq d_0$. Let $k:= \floor{\tfrac{1}{4}(3.895\,t\sqrt{\ln d})}$. Our goal is to show that $G$ contains an $H$-minor. We can take $d_0$ large enough so that $k \geq 2t > 1$. By \cref{rw1}, $G$ contains either $K_{2t}$ or $G'$ as a minor, where $G'$ is a graph with $n$ vertices and minimum degree at least $0.6518n$ such that $k+1 \leq n \leq 3.895\,t\sqrt{\ln d}$. If $G$ contains a $K_{2t}$ minor, then $G$ contain a $K_{s+t}$ minor and we are done. Otherwise apply \cref{sblocks} to $G'$ where $\rho = 0.6517$. Since $s \leq 10^{-5}\tfrac{t}{\ln t} < 5 \times 10^{-5}\tfrac{t}{\log_2 t} \leq 5 \times 10^{-5}\tfrac{n}{\log_2n}$, we have $2s\log_2 n \leq 10^{-4}n$, and it follows that $G'$ has minimum degree at least $\rho n + 2s\log_2 n$, as required. Let $G'':=G_s$ from \cref{sblocks}. Then $G''$ has minimum degree at least $0.6517n > 0.6517|V(G'')|$. 

We wish to find an $(H-S)$-minor of $G''$. Then contracting each $A_i$ to a single vertex gives an $H$-minor in $G'$. 
We now verify that \cref{rw2} gives the desired $(H-S)$-minor in $G''$, where $\lambda := 0.6517$ and $\epsilon := 10^{-6}$ and $b:=(1-\lambda+\epsilon)^{-1}$. Clearly $G''$ has sufficient minimum degree (by our choice of $\lambda$) and the average degree $d$ of $H-S$ is sufficiently large (since we may assume $d_0$ is sufficiently large in terms of absolute constants), and so all that remains is to ensure that $G''$ has sufficiently many vertices. Note $|V(G'')| \geq n - 2s\log_2n$ from \cref{sblocks}(d). We may choose $d_0$ large enough so that $\tfrac{3.895\,t\sqrt{\ln d}}{\log_2(3.895\,t\sqrt{\ln d})} \geq 100 \tfrac{t}{\ln t}$ and so $s \leq 10^{-5} \tfrac{t}{\ln t} \leq 10^{-7}\tfrac{3.895\,t\sqrt{\ln d}}{\log_2(3.895\,t\sqrt{\ln d})}$. Thus it follows that 
\begin{align*}
|V(G'')| &\geq n - 2(10^{-7})\tfrac{3.895\,t\sqrt{\ln d}}{\log_2 (3.895\,t\sqrt{\ln d})}\log_2 n \\
&\geq \floor{\tfrac{1}{4}(3.895\,t\sqrt{\ln d})}+1 - 2(10^{-7})(3.895\,t\sqrt{\ln d}) \\ 
&\geq \tfrac{1}{4}(3.895\,t\sqrt{\ln d}) - 2(10^{-7})(3.895\,t\sqrt{\ln d}) \\ 
&= (\tfrac{1}{4}-2(10^{-7}))(3.895\,t\sqrt{\ln d}) \\ 
&= (\tfrac{1}{4}-2(10^{-7}))(3.895\,t\sqrt{\ln b}\sqrt{\log_b d}) \\ 
&\geq 1.00002\sqrt{\log_b d}\,t \\ 
&= (1+20\epsilon)\sqrt{\log_b d}\,t \\
 &\geq (1+\epsilon)(1+\sqrt{\log_bd})t \text{\qquad(taking $d_0$ large enough)}\\ 
 &\geq (1+\epsilon)\ceil{\sqrt{\log_b d}}\,t.
\end{align*} 
Hence it follows that $G''$ contains an $(H-S)$-minor. This is an $(H-S)$-minor in $G'$ that avoids the sets $A_1,\dots,A_s$. Hence $G'$ (and also $G$) contains our desired $H$-minor.
\end{proof}

\begin{proof}[Proof of \cref{genfh}]
Let $G$ be a graph with average degree $d(G)\geq 4k$, where $k:=\ceil{(1+\epsilon)f(H-S)}$. If $s=0$ this result is trivial, so assume $s \geq 1$. It follows from \cref{rw1} that $G$ contains either a $K_k$-minor or a minor $G'$ with $n$ vertices and minimum degree $\delta(G') \geq 0.6518n$ such that $k \leq \delta(G') < n \leq 4k$.  Since $f(H-S) \geq t-2$ (as $K_{t-1}$ has no $(H-S)$-minor), it follows that $k \geq (1+\epsilon)f(H-S) \geq (1+\epsilon)(t-2) \geq t-2 + 100s\ln{t}-2\epsilon \geq t + 100s - 4 > t+s$. Hence a $K_k$-minor contains an $H$-minor. Now assume \cref{rw1} finds $G'$ as a minor.

We wish to apply \cref{sblocks} to $G'$ with $\rho=\tfrac{1}{2}$. It is sufficient to show that $0.1518n \geq 2s\log_2 n$. Since $n \geq k+1 \geq (1+\epsilon)f(H-S) + 1 \geq t-1 + \epsilon f(H-S)$, and $n$ and $t$ are integers, it follows that $n \geq t$. Thus $s \leq \tfrac{\epsilon}{100}\tfrac{t}{\ln t}\leq \tfrac{\epsilon}{100}\tfrac{n}{\ln n}$ and so $2s\log_2 n \leq \tfrac{n}{50\ln 2} < 0.1518n$ as required. Let $G'':=G_s$ from \cref{sblocks}. By \cref{sblocks}(b), it follows that $G''$ has minimum degree at least $\delta(G') - 2s\log_2n \geq k - 2s\log_2n$. From our upper bound on $s$ and since $k < n \leq 4k$, it follows that $k - 2s\log_2n \geq k - \epsilon\tfrac{n}{50\ln n}\log_2n \geq k(1 - \tfrac{2}{25\ln 2}\epsilon ) \geq (1+\epsilon)(1-0.1155\epsilon)f(H-S)>f(H-S)$. Thus $d(G'')>f(H-S)$ and $G''$ contains an $(H-S)$-minor. Contracting the sets $A_1,\dots,A_s$ to single vertices gives an $H$-minor in $G'$, and thus also in $G$.
\end{proof}

\begin{proof}[Proof of \cref{total}]
First suppose that $d(H-S) \geq d_0$, where $d_0$ is from \cref{larged}. Let $c \geq 3.895$. By \cref{larged}, $f(H) \leq 3.895\,t\sqrt{\ln d(H-S)} \leq ct\sqrt{\ln (d(H-S)+2)}$ as required. Alternatively, $d(H-S) < d_0$. By \eqref{RW2} we have $f(H-S) \leq (1+3.146d(H-S))t \leq (1+3.146d_0)t$. Let $\epsilon = 10^{-3}$. It follows from \cref{genfh} that $f(H) \leq 4\ceil{(1+\epsilon)(1+3.146d_0)t}$. Setting $c$ large enough, this proves $f(H) \leq ct\sqrt{\ln (d(H-S)+2)}$, as required. 
\end{proof}

\section{Series-Parallel Graphs}
\label{sec:thm2}

A graph is \emph{series-parallel} if it contains no $K_4$ minor (or equivalently, it has treewidth at most 2). \citet[Lemma~3.3]{superfast5} proved that $f(H) \leq 6.929t$ for every $t$-vertex 2-degenerate graph $H$. Every series-parallel graph $H$ is 2-degenerate, implying $f(H)\leq 6.929t$. Here we make the following improvement. 

\begin{proposition}
\label{2tree} For every $t$-vertex series-parallel graph $H$, $$f(H) \leq 2t-4.$$
\end{proposition}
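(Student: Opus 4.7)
The plan is to pass from the average-degree hypothesis to a minimum-degree condition, reduce $H$ to a 2-tree, and then recursively construct the minor by induction on $t$.

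First I would reduce to 2-trees: every series-parallel graph on $t$ vertices is a spanning subgraph of some $t$-vertex 2-tree $T$ (iteratively add bridges between components, then further edges that preserve $K_4$-minor-freeness, until the edge count reaches $2t-3$). Since $f$ is monotone under spanning subgraphs, it suffices to prove $f(T)\le 2t-4$ for every $t$-vertex 2-tree $T$. Next, given $G$ with $d(G)\ge 2t-4$, I would pass to a vertex-minimal subgraph still satisfying this bound. A short calculation shows that any vertex of degree at most $t-2$ can be removed without dropping the average degree below $2t-4$, so minimality gives $\delta(G)\ge t-1$, and in particular $|V(G)|\ge t$.

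I would then prove $G$ contains a $T$-minor by induction on $t$. The base case $t=3$ is immediate: $\delta(G)\ge 2$ forces a cycle in $G$, which gives a $K_3$-minor. For $t\ge 4$, pick a vertex $v$ of $T$ of degree $2$ whose two neighbours $a,b$ are adjacent in $T$ (every 2-tree on at least three vertices has such a ``simplicial leaf''), and set $T':=T-v$, a 2-tree on $t-1$ vertices with $ab\in E(T')$. The inductive hypothesis applied to $G$ and $T'$ (noting $d(G)\ge 2t-4\ge 2(t-1)-4$) yields a $T'$-minor in $G$, specified by branch sets $(B_u)_{u\in V(T')}$. To close the induction I would find a vertex $x\in V(G)\setminus\bigcup_u B_u$ adjacent both to $B_a$ and to $B_b$ and set $B_v:=\{x\}$, completing the $T$-minor.

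The main obstacle is the existence of this extending vertex $x$: when $|V(G)|$ is much larger than $t$, the branch sets of the $T'$-minor may absorb every common neighbour of $B_a$ and $B_b$. To handle this I would take the $T'$-minor that minimises $\sum_u|B_u|$ and then fix adjacent vertices $y\in B_a$, $z\in B_b$ (which exist since $ab\in E(T')$); the bound $\delta(G)\ge t-1$ yields $|N(y)\cap N(z)|\ge 2t-|V(G)|$, which already supplies an unused common neighbour when $|V(G)|$ is moderate. For larger $|V(G)|$ I would invoke a local exchange: if every vertex of $N(y)\cap N(z)$ lies in some branch set, one identifies a branch set $B_{u_0}$ containing a ``leaf'' vertex $w\in N(y)\cap N(z)$ whose removal preserves both the connectedness of $B_{u_0}$ and all required inter-branch-set adjacencies; by minimality of $\sum_u|B_u|$ no such $w$ can be spurious, and the freed $w$ serves as $x$. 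I expect this exchange argument, combined with the boundary case in which all branch sets are singletons (and hence no shrinking is possible, but $|V(G)|$ is correspondingly constrained), to be the technically delicate part of the proof.
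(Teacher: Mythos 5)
Your reduction to $2$-trees and the overall inductive scheme match the paper, but the crucial extension step has a genuine gap. You pass to a \emph{vertex-deletion}-minimal subgraph, which only yields $\delta(G)\ge t-1$, and then look for the new branch vertex inside $N(y)\cap N(z)$ for fixed adjacent $y\in B_a$, $z\in B_b$. With only a minimum-degree hypothesis this set can be empty: for instance $K_{2t-4,N}$ with $N$ large has average degree close to $4t-8\ge 2t-4$ and minimum degree at least $t-1$, yet adjacent vertices have \emph{no} common neighbours. Your counting bound (which in any case should read $|N(y)\cap N(z)|\ge 2(t-1)-|V(G)|$ and is vacuous once $|V(G)|\ge 2t-2$) and your exchange argument both operate inside $N(y)\cap N(z)$, so in such graphs they have nothing to work with. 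Moreover the exchange argument is internally inconsistent as stated: you invoke minimality of $\sum_u|B_u|$ to conclude that no vertex $w$ of a branch set is spurious, and then propose to free that same $w$ to serve as $x$ --- if $w$ is not spurious it cannot be freed. What you actually need is a vertex adjacent to the \emph{sets} $B_a$ and $B_b$, and minimum degree alone does not supply one.

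The paper sidesteps all of this by taking $G$ \emph{minor}-minimal (closed under contraction as well as deletion) subject to $d(G)\ge 2t-4$. A one-line computation --- contracting an edge $uv$ loses one vertex and $1+|N(u)\cap N(v)|$ edges, so if the result drops below average degree $2t-4$ then $|N(u)\cap N(v)|\ge t-2$ --- shows that \emph{every edge of $G$ lies in at least $t-2$ triangles}. This is exactly the local structure your argument is missing: it guarantees a fresh common neighbour of the two already-embedded endpoints at every step, so the $2$-tree embeds greedily as a \emph{subgraph} and the entire branch-set/exchange machinery becomes unnecessary. To repair your proof, replace vertex-minimality by minor-minimality and use the triangle property in place of the minimum-degree bound.
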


\begin{proof}
A \emph{$2$-tree} is a graph that can be constructed by starting with $K_3$ and repeatedly selecting an edge and adding a new vertex adjacent to exactly the endpoints of that edge. It is well known that 2-trees are exactly the edge-maximal series-parallel graphs \cite{D05}. Hence it suffices to prove that $f(H) \leq 2t-4$ for every $t$-vertex 2-tree  $H$. Say a graph $G$ is \emph{minor-minimal} (with respect to $t$) if $d(G) \geq 2t-4$ but every proper minor of $G$ has average degree less than $2t-4$. Considering the effect of contracting an edge on the average degree, it is easily seen that every edge of a minor-minimal graph is in at least $t-2$ triangles; see \cite[Lemma 2.1]{superfast5}. It suffices to prove  that every minor-minimal graph $G$ contains an $H$-minor. In fact, we prove that every $2$-tree $H_0$ on at most $t$ vertices can be embedded in 
$G$ as a subgraph. 

We do this by induction on $|V(H_0)|$ with $G$ fixed. Suppose $|V(H_0)|=3$. Then every edge of $G$ is in at least $t-2 \geq 1$ triangles, and it is trivial to embed $H_0$. Now suppose $|V(H_0)|=i>3$. There is a vertex $v \in V(H_0)$ with neighbours $x$ and $y$ such that $H_0 - v$ is also a $2$-tree. By  induction, $H_0 - v$ can be embedded in $G$. Let $x',y'$ denote the vertices of $G$ where $x,y$ are respectively embedded. Then $x'y'$ is an edge of $G$ in at least $t-2 \geq |V(H_0) - \{v,x,y\}| +1$ triangles. Hence there exists a common neighbour $w$ of $x'$ and $y'$ in $G$ where no vertex of $H_0 - \{v,x,y\}$ is embedded. Clearly, neither $x$ nor $y$ are embedded at $w$. Embedding $v$ at $w$, we obtain $H_0$ as a subgraph of $G$, as required.
\end{proof}

\section{Open Problems}
\label{sec:future}

An obvious question is whether the upper bound on $f(H)$ provided by \cref{total} is within a constant factor of optimal for all $H$. If $S$ is a small (perhaps empty) set of vertices in $H$ and $H-S$ is sufficiently large and either random or close-to-regular, then it follows from the lower bound of \citet{myersthom} that $$f(H) \geq f(H-S) \geq c|V(H-S)|\sqrt{\ln(d(H-S))},$$ and  \cref{total} is within a constant factor of optimal.  

Is it possible that \cref{total} always gives a result within a constant factor of optimal (for the best possible choice of $S$)? If not, for which graphs is \cref{total} not within a constant factor of optimal, and what approach should we take for such graphs? Is it possible to determine a small set of ``techniques" for upper bounding $f(H)$ such that for each graph $H$, one of these bounds is within a constant factor of optimal? Similarly, is there a constant factor approximation algorithm for computing $f(H)$?
 

\end{document}